\renewcommand*{\backref}[1]{}
\renewcommand*{\backrefalt}[4]{%
	\ifcase #1 (Not cited.)%
	\or        (Cited on page~#2.)%
	\else      (Cited on pages~#2.)%
	\fi}
\newtheorem{prop}{Proposition}
\newtheorem{Def}{Definition}
\newtheorem{Thm}{Theorem}
\newtheorem*{Thm*}{Theorem}
\newtheorem{Cor}{Corollary}
\newtheorem*{Egs*}{Example}
\newcommand{\diff}{\mathrm{d}}
\newcommand{\N}{\mathbb{N}}
\newcommand{\R}{\mathbb{R}}
\newcommand{\E}{\mathbb{E}}
\renewcommand{\P}{\mathbb{P}}
\renewcommand{\bf}{\mathbf}
\g@addto@macro\bfseries{\boldmath}
\newcommand*{\rom}[1]{\expandafter\@slowromancap\romannumeral #1@}
\begin{document}
	
	\title{A Study of the Probability Distribution of the Balls in Bins Process with Power Law Feedback}
	\author{Samuel Forbes\thanks{forbessam2@gmail.com} 
		}
	\date{August 2023}
	
	\maketitle
	
	\begin{abstract}
		We analyse the balls in bins process with feedback with primary focus on the power law feedback function $f(\omega)=\eta \omega^{\gamma}\,$, $\eta>0\,$ $\gamma \geq0\,$. Using the recursive solution to the master equation we find for power law feedback numerical evidence that the probability mass function for finite time scales asymptotically with  $\omega^{-\gamma}\,$ for $\gamma>1\,$. We also provide simulations supporting a previous result by Oliveira (corollary to Theorem 4 in \cite{oliveira2009onset}) that the tail of the losers scale as $\omega^{-(\gamma-1)}\,$  but extending to $N \geq 2$ bins. We thus find evidence that the balls in bins process with power law feedback produces power law distributions as is common to many real world phenomena.
	\end{abstract}
	
	\renewcommand{\abstractname}{Acknowledgements}
	\begin{abstract}
		Samuel Forbes would like to thank Stefan Grosskinsky for valuable assistance with the background probability theory, simulations and comments on the manuscript, Paul Chleboun for important assistance with the binary tree search implementation to produce Figure \ref{urn_cpp_tails} and financial support from EPSRC through grant EP/L015374/1 when part of this work was studied.
	\end{abstract}
	
	\section{Introduction}
	
	Suppose there are $N \in \N$ bins and that the number of balls in bin $j \in \{1,\,2, \dots,N\}$ at iteration $n=0,\,1,\,2,\dots$ is $I_n(j)\in \mathbb{N}\,$. We shall refer to the bins as \textbf{agents} as this model may be applied to many types of real world phenomena, examples of which will be discussed below.
	Define the set of all agents at iteration $n$ as $\bf{I}_n := \{I_n(1), I_n(2), \dots, I_n(N)\}$. Then suppose the probability that agent $j$ gains one ball at iteration $n$ is
	\begin{equation}
	\P(I_{n+1}(j) = I_{n}(j)+ 1|\bf{I}_{n}) = \frac{f(I_{n}(j))}{
		\sum_{i=1}^{N}f(I_{n}(i))}\,
	\label{update_rule}
	\end{equation}
	where $f:\N \rightarrow \R_{>0}$ is a continuous positive \textbf{feedback function}.
	The \textbf{balls in bins process with feedback} \cite{drinea2002balls, oliveira2008balls}, also called the generalised P\'{o}lya urn with feedback \cite{pemantle2007survey, gottfried2023asymptotics} is the Markov chain $\{\bf{I}_n \,: \, n \in \N\}$ which we shall refer to as the \textbf{feedback model}. We shall focus on the following \textbf{power law feedback function}
	\begin{equation}
	f(\omega) = \eta \omega^{\gamma} \quad \text{ such that } \eta > 0\,, \gamma \geq0\,.
	\label{pl_feedback_fn}
	\end{equation}
	
	A feature of the feedback model is the presence of \textbf{monopoly} where one agent after a certain time gains every new ball, see Theorem 1.1 in \cite{oliveira2008balls}. Monopoly is something seen in the real world, for example the extreme wealth of the owner(s) of a company with market domination such as Google. Another example is the city of  London which has a comparatively vast population compared to other cities in the UK. However the monopoly in real life variables is usually not as strong as in the feedback model and in reality there is often more than one extremely sizeable agent.
	 
	 Another feature of the feedback model that occurs alongside monopoly is a \textbf{power law distribution} in the tail of the loser (non-monopoly) agents. In particular for the power law feedback function \eqref{pl_feedback_fn} and $N=2$ agents the tail of a loser agent scales proportional to $\omega^{-(\gamma-1)}$ which is a corollary to Theorem 4 in \cite{oliveira2009onset}. We provide simulations in Section \ref{sec_tail_sim} to support that this is also true for $N \geq 2$ agents. Many real world phenomena have power law distributions and this may be due to a feedback mechanism and thus the feedback model could be applicable to these cases \cite{drinea2002balls, pemantle2007survey, gottfried2023asymptotics, oliveira2009onset}.
	  In fact the feedback model has been applied directly to US wealth data from 1988-2012 with power law feedback \eqref{pl_feedback_fn} \cite{vallejos2018agent}. Both wealth and city size exhibit power laws in the probability distribution, see for example \cite{vermeulen2018fat,vallejos2018agent, forbes2022study} and \cite{mori2020common} respectively. 
	 Both power laws and monopoly are seen in the empirical tail, defined as one minus the empirical cumulative distribution function, of simulations\footnote{All simulations and numerical solutions in this paper can be found in \cite{sam_git_code}. } of the feedback model, see Figure \ref{urn_cpp_tails}.
	
	\begin{figure}[H]
		\centering
		\captionsetup{justification=centering}
		\includegraphics[width=0.7\textwidth]{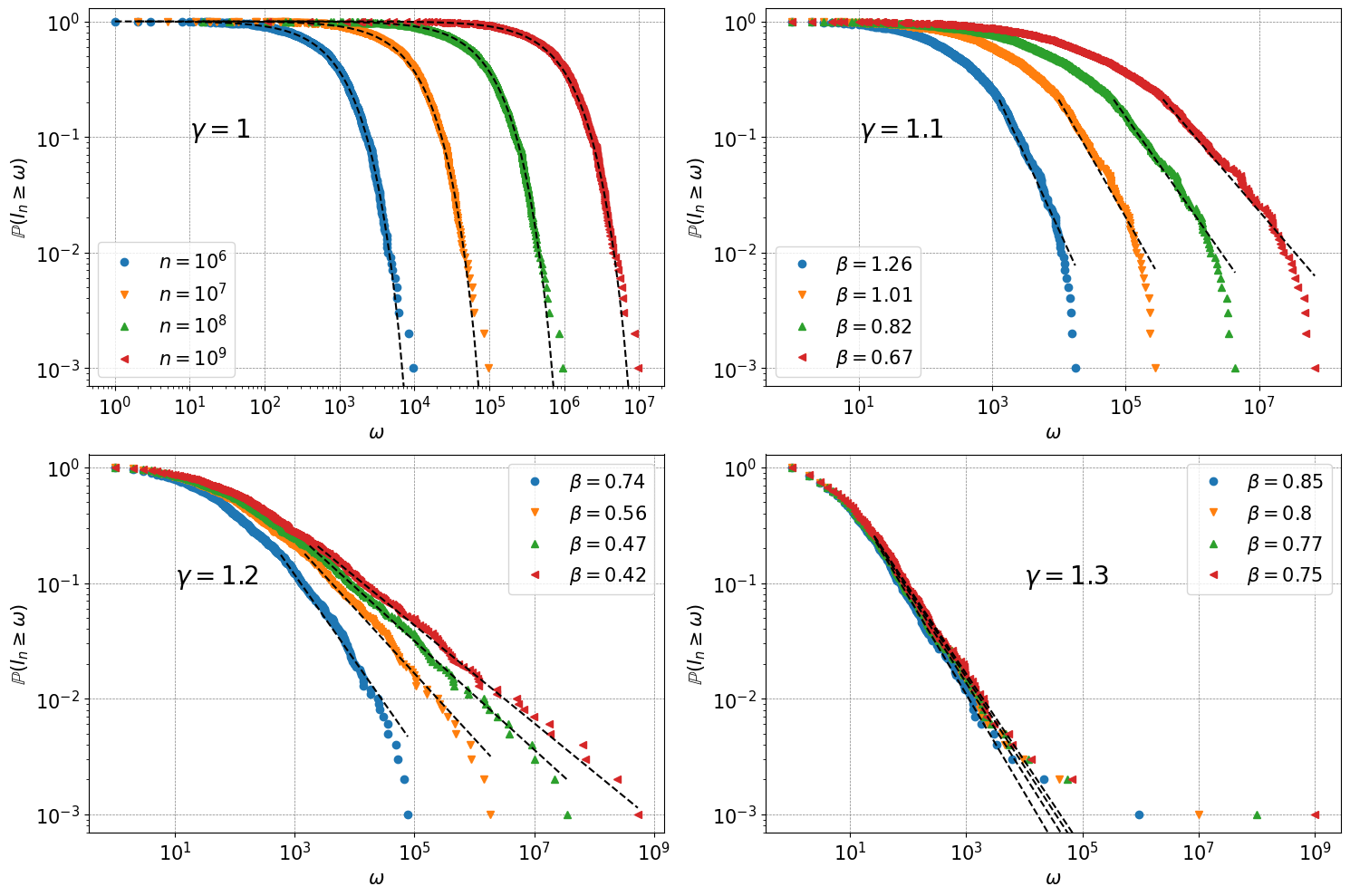}
		\caption{Empirical tail of a single simulation of feedback model with feedback function $f(\omega)=\omega^{\gamma}$ for $\gamma=1,\,1.1,\,1.2,\,1.3$, (top left, top right, bottom left, bottom right respectively), $N=1000$ agents, $\bf{I}_0=\bf{1}_{N}=\{1,1,\dots,1\} $ and at iteration $n=10^6,\,10^7,\,10^8,\,10^9\,$. Black lines in top left are exponential fits. Black lines for $\gamma>1$ are power laws proportional to $\omega^{-\beta}$ fitted with the MLE method found in \cite{clauset2009power} approximating the discrete values as continuous. Monopoly agents are most obvious for bottom right with $\gamma=1.3\,.$ }
		\label{urn_cpp_tails}
	\end{figure}
	
	We shall now introduce an equivalent form of the feedback model in continuous time that will allow us to find the probability mass function solution. Suppose the set of $\bf{W}_t=\{W_t(1), W_t(2), \dots, W_t(N)\}$ is a continuous time homogeneous Markov chain where agent $j$ has $W_t(j) \in \N$ balls at time $t \in [0,\infty)\,.$ Suppose also 
	$\bf{W}_0=\bf{I}_0\,.$
	For convenience let us consider a single agent and define $W_t:=W_t(j)\,$. 
	Define $p_t(\omega,\nu) :=\P(W_t=\nu|W_0=\omega)$ as the transition mass function of $W_t$ and $p_t(\omega):=\P(W_t=\omega)$ as the mass function of $W_t\,.$ Define the \textbf{rates} as $g(\omega,\nu) := \frac{\diff}{\diff t} p_t(\omega,\nu) \big|_{t=0}\,$. We set the rates as 
	\begin{equation}
	g(\omega,\nu) = \begin{cases}
	f(\omega) & \nu=\omega+1 \,, \\
	-f(\omega) & \nu=\omega \,, \\
	 0 & \text{otherwise} 
	\end{cases}
	\label{rates}
	\end{equation}
	where $f$ is the feedback function as above.

	Define the holding time $H_{\omega}$ as the time until $W_t$ jumps up by one ball. Thus
	for some time $\tau \in [0,\infty)$
	\begin{equation*}
	H_{\omega} := \inf_{t} \{W_{\tau+t} = \omega +1 : W_{\tau}= \omega \}\,.
	\end{equation*}
	We note, see e.g. Section 6.9 of \cite{stirzaker1992probability}, the holding time $H_{\omega}$ follows an exponential distribution with mean 
	$1/|g(\omega,\omega)|=1/f(\omega)$ and thus $H_{\omega} \sim \text{Exp}(f(\omega))\,.$ We now define the \textbf{jump times} $t_0=0<t_1<t_2<\dots $ as the cumulative sum of the holding times which occur when the Markov chain $\bf{W}_t$ changes to a new state where one of the agents gains one ball. We note the agents $W_t$ within $\bf{W}_t$ \textbf{jump independently} of each other. Then by \textbf{exponential embedding}, see Theorem 2 in \cite{oliveira2009onset}, it can be shown that we have $\bf{W}_{t_n}=\bf{I}_n\,$ so that $\bf{W}_t$ is equal to the feedback model $\bf{I}_n$ at the jump times $t_n$. We shall refer to the \textbf{pure birth process}, see Chapter \rom{17}, Section 3 of \cite{feller1967introduction},
	$\{\bf{W}_t\,:\, t=t_n\}$ as the \textbf{feedback process}.
	
	For $W_0=\omega_0$ the \textbf{master equation}, see e.g. Section 3 of \cite{gardiner1985handbook} and   Chapter \rom{17}, Section 3 of \cite{feller1967introduction} specifically for pure birth processes, for the time derivative of the probability mass function for an agent in $\bf{W}_t$ with the rates \eqref{rates} is	
	\begin{equation}
	\frac{\diff}{\diff t}p_t(\omega)= \begin{cases}
	-p_t(\omega)f(\omega) & \omega = \omega_0  \\
	p_t(\omega-1)f(\omega-1)-p_t(\omega)f(\omega) & \omega> \omega_0 \,. \\
	\end{cases}
		\label{master_eqn}
	\end{equation}

	The master equation has a recursive solution found in \cite{severo1969recursion}. 
	We show in Section \ref{sec_master_eqn} that the first term of this solution when applied to power law feedback \eqref{pl_feedback_fn} with $\gamma>1$ scales asymptotically as $\omega^{-\gamma}\,$ and is a good approximation under certain parametrisations. We emphasise that our new work is found in Sections \ref{sec_master_eqn} and \ref{sec_tail_sim}.

	\section{Overview of Some Existing Results}
	
	\label{sec_overview}
	
	As mentioned in the introduction, interesting properties of the feedback model are already known. In particular, conditions on the feedback function $f(\omega)$ leads to monopoly where one agent gains all but a finite number of balls and fixed rankings where the order of agents from richest to poorest (most balls to least) becomes fixed after a certain period of time. As these cases may have broad applications to models of real world phenomena as discussed in the introduction we reproduce them in this section.
	
	\begin{Def}[Monopoly]
	Monopoly occurs if there exists an agent $j_m \in \{1,2,\dots,N\}$ and $n_m \in \mathbb{N}$ such that for all $n>n_m$ we have $\P(I_{n+1}(j_m)=I_n(j_m)+1|\mathbf{I}_n)=1\,.$
	\label{def_monopoly}
	\end{Def}
	
	We shall refer to the agent $j_m$ as a \textbf{monopoly agent}.
	
	\begin{Def}[Losers]
		We define the random variable $\textbf{L}$ as the number of balls of the $N-1$ `loser' non-monopoly agents once monopoly has occurred. 
		\label{def_losers}
	\end{Def}
	
	To be precise for  $n_m \in \N$ as in Definition \ref{def_monopoly} then all
	$j_l \in \{1,2,\dots,N\}$ such that $j_l \neq j_m $ are the loser agents. Then for all $n>n_m$ we have $\P(I_{n+1}(j_l)=I_n(j_l)+1|\mathbf{I}_n)=0$ and thus 
	$I_n(j_l)=I_{n+1}(j_l)=I_{j_l} < \infty$ and so $\textbf{L}= \{I_{j_l}\}\,.$
	
	\begin{Def}[Fixed Rankings]
		Rankings of agents become fixed if there exists an $n_f \in \mathbb{N}$ such that for a particular ordering of the $N$ agents $j_i\,,i=1,2,\dots,N\,$
		\begin{equation*}
		I_n(j_1)<I_n(j_2) < \dots < I_n(j_N) \quad \forall \, n>n_f\,.
		\end{equation*}
	\end{Def}
	
	Clearly if monopoly occurs then so will fixed rankings.
	
	\begin{Thm}[Theorem 1.1 \cite{oliveira2008balls}]
		In the feedback model  we have the following three mutually exclusive cases irrespective of initial conditions:
		\begin{enumerate}
			\item If $\sum_{\omega=1}^{\infty}f(\omega)^{-1} <\infty $ then monopoly occurs.
			\label{feed_thm1}
			\item If $\sum_{\omega=1}^{\infty}f(\omega)^{-1} =\infty $ but $\sum_{\omega=1}^{\infty}f(\omega)^{-2} <\infty $ then monopoly does not occur but there will be fixed rankings.
			\label{feed_thm2}
			\item If $\sum_{\omega=1}^{\infty}f(\omega)^{-2} = \infty $ there will be no monopoly and no fixed rankings.
			\label{feed_thm3}
		\end{enumerate}
		\label{feedback_cases}
	\end{Thm}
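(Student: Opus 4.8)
The plan is to work entirely in the continuous-time embedding $\mathbf{W}_t$ described above, where by the exponential embedding (Theorem 2 in \cite{oliveira2009onset}) the agents evolve as $N$ independent pure birth processes, agent $j$ waiting an independent time $H_\omega^{(j)}\sim\text{Exp}(f(\omega))$ to pass from $\omega$ to $\omega+1$ balls. For agent $j$ with $W_0(j)=\omega_0(j)$ I introduce the level-hitting times $A_j(\omega):=\sum_{k=\omega_0(j)}^{\omega-1}H_k^{(j)}$ and the explosion time $T_j:=\lim_{\omega\to\infty}A_j(\omega)=\sum_{k\ge\omega_0(j)}H_k^{(j)}$. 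The key observation is that monopoly in the discrete process $\mathbf{I}_n$ is equivalent to at least one agent exploding in finite continuous time: if $T_{j_m}<\infty$ then the combined jump times $t_n$ accumulate at $T_{j_m}$, so only finitely many of them can be jumps of a non-exploding agent, and past that point every new ball goes to $j_m$, which is exactly Definition \ref{def_monopoly}.

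First I would dispose of Case \ref{feed_thm1}. Since $\E[T_j]=\sum_{k\ge\omega_0(j)}f(k)^{-1}<\infty$, each $T_j$ is finite almost surely; as there are finitely many agents and the $T_j$ are continuous and independent, $\min_j T_j$ is attained uniquely and that agent monopolises by the observation above. For the absence of monopoly in Cases \ref{feed_thm2} and \ref{feed_thm3} I would use the explosion criterion for pure birth processes (Chapter \rom{17} of \cite{feller1967introduction}): computing the Laplace transform $\E[e^{-T_j}]=\prod_{\omega\ge\omega_0(j)}f(\omega)/(f(\omega)+1)$, this product vanishes precisely when $\sum_\omega f(\omega)^{-1}=\infty$, forcing $T_j=\infty$ almost surely so that no agent ever explodes and all keep gaining balls forever. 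The elementary inequality $f^{-2}\le f^{-1}$, valid once $f(\omega)\ge1$ (which holds eventually whenever $\sum f^{-1}<\infty$ since then $f\to\infty$), shows $\sum f^{-1}<\infty\Rightarrow\sum f^{-2}<\infty$, which is what makes the three cases mutually exclusive and exhaustive.

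It remains to separate Cases \ref{feed_thm2} and \ref{feed_thm3} through the ranking behaviour, for which I would study, for a fixed pair $i\ne j$, the difference of hitting times $D(\omega):=A_i(\omega)-A_j(\omega)$. Up to finitely many boundary terms from unequal starting levels, $D(\omega)$ is a sum of independent symmetric increments $H_k^{(i)}-H_k^{(j)}$ with variance $2f(k)^{-2}$. In Case \ref{feed_thm2} the variances are summable, so by Kolmogorov's three-series theorem $D(\omega)$ converges almost surely to a finite limit whose law is continuous, hence almost surely nonzero; a short monotonicity argument --- if $A_i(\omega)<A_j(\omega)$ for all large $\omega$ then agent $i$ reaches every high level before $j$, so $W_t(i)\ge W_t(j)$ for all large $t$ --- upgrades this to eventually fixed rankings. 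In Case \ref{feed_thm3} the variance of $D(\omega)$ diverges, the centred symmetric walk cannot converge, and I would show it changes sign infinitely often, so the two agents swap ranks infinitely often and rankings are never fixed; combined with the already-established absence of monopoly this gives the third case.

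The main obstacle is the sign-change statement in Case \ref{feed_thm3}: divergence of $\sum f^{-2}$ gives, via the converse three-series theorem, only that $D(\omega)$ fails to converge, whereas I need the stronger recurrence $\limsup_\omega D(\omega)=+\infty$ and $\liminf_\omega D(\omega)=-\infty$ almost surely. I would obtain this from symmetry of the increments together with a Lindeberg central limit theorem for $D(\omega)/\sqrt{\operatorname{Var}D(\omega)}$ (the Lindeberg condition being easy to verify, since the individual variances $f(k)^{-2}\to0$ while their sum diverges), which forces $\P(D(\omega)>0)\to\tfrac12$ and hence, with a Hewitt--Savage zero-one argument, sign changes infinitely often. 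The secondary technical point, common to both Cases \ref{feed_thm2} and \ref{feed_thm3}, is to translate statements about the order of the hitting times $A_j(\omega)$ into statements about the order of the ball counts $W_t(j)$ at a common time $t$; this is where the monotone relationship between a pure birth path and its inverse must be used carefully.
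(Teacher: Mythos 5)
First, a point of reference: the paper does \emph{not} prove Theorem \ref{feedback_cases}; it reproduces it verbatim from \cite{oliveira2008balls} as background, so there is no internal proof to compare against. Your proposal follows the same strategy as the cited source (Rubin-style exponential embedding), and most of it is sound: Case \ref{feed_thm1} (finite expected explosion time, almost sure uniqueness of the minimal explosion time among finitely many independent continuous random variables, and the observation that only finitely many of the combined jumps before the first explosion belong to non-exploding agents), the mutual exclusivity via $f^{-2}\le f^{-1}$ eventually, and the absence of monopoly when $\sum_\omega f(\omega)^{-1}=\infty$ (vanishing Laplace transform, hence no explosion, hence every agent receives infinitely many balls) are all correct.

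The genuine gap is the oscillation step in Case \ref{feed_thm3}, where both tools you invoke fail. The increments $H_k^{(i)}-H_k^{(j)}=f(k)^{-1}(E_k-E_k')$, with $E_k,E_k'$ i.i.d.\ standard exponentials, are independent and symmetric but \emph{not} identically distributed, so the Hewitt--Savage zero-one law (which requires an i.i.d.\ sequence, its events being invariant under finite permutations) does not apply to $D(\omega)$. The Lindeberg condition can also genuinely fail: the theorem assumes nothing beyond positivity of $f$, so e.g.\ $f(k)=2^{-k}$ is admissible; then the variances $2f(k)^{-2}$ grow so fast that the last increment dominates the whole sum and the normalised $D(\omega)$ stays Laplace-like rather than becoming Gaussian, so you cannot conclude $\P(D(\omega)>0)\to\tfrac12$ from a CLT. (That probability is in fact exactly $\tfrac12$ for every $\omega$, by symmetry and atomlessness of the law of $D(\omega)$ --- no CLT needed --- but this alone does not yield sign changes infinitely often.) The repair is standard but different: the events $\{\limsup_\omega D(\omega)=+\infty\}$ and $\{\liminf_\omega D(\omega)=-\infty\}$ \emph{are} tail events, since altering finitely many increments shifts all later $D(\omega)$ by a single finite constant, so Kolmogorov's zero-one law (not Hewitt--Savage) applies; positivity of their probability follows from characteristic functions, namely $\E[e^{itD(\omega)}]=\prod_{k}\bigl(1+t^2f(k)^{-2}\bigr)^{-1}\to 0$ for every $t\ne0$ when $\sum_\omega f(\omega)^{-2}=\infty$, which forces $\P(|D(\omega)|\le M)\to0$ for every $M$ and hence $\sup_\omega|D(\omega)|=\infty$ almost surely; since the union of the two tail events then has probability one and they have equal probability by the distributional symmetry of the increment sequence, both have probability one. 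Your reduction of ``ranks swap infinitely often'' to sign changes of $D$ is fine once this dichotomy is in place. A secondary, fixable gap, which you flag yourself: the paper's definition of fixed rankings demands \emph{strict} inequalities eventually, while your inversion argument in Case \ref{feed_thm2} only yields $W_t(i)\ge W_t(j)$ eventually; ruling out ties at arbitrarily large times needs an extra Borel--Cantelli argument (a tie at level $\omega$ eventually forces $H_\omega^{(i)}>|D_\infty|/2$, and $\sum_\omega e^{-c f(\omega)}\le C\sum_\omega f(\omega)^{-2}<\infty$ in Case \ref{feed_thm2}).
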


	Theorem \ref{feedback_cases} with $f(\omega) = \eta \omega^{\gamma}$ \eqref{pl_feedback_fn} means that monopoly occurs with $\gamma>1$, no monopoly but fixed rankings when $1/2<\gamma \leq 1$ and no monopoly and no fixed rankings for $0 \leq \gamma \leq 1/2\,.$
	Examples of feedback functions satisfying the different cases in Theorem \ref{feedback_cases} are given in \cite{oliveira2008balls, gottfried2023asymptotics}.
	For a particular class of feedback functions, called valid feedback functions, satisfying Case \ref{feed_thm1}. of Theorem \ref{feedback_cases}, a Theorem on the tail of the losers has been found, see Theorem 4 in \cite{oliveira2009onset}. 
	In particular the following corollary, as mentioned in the abstract and introduction, applies this theorem to the power law feedback function \eqref{pl_feedback_fn}.
	\begin{Cor}[Corollary to Theorem 4 in \cite{oliveira2009onset}]
		Suppose there are $N=2$ agents and $f$ is the power law feedback function \eqref{pl_feedback_fn} with $\gamma>1$ then there exists an $\alpha>0$ such that\footnote{Where for general functions $g$ and $h\,$, we define $g(x) \simeq h(x) \Leftrightarrow g(x)/h(x) \rightarrow 1$ as $x \rightarrow \infty\,.$} $\mathbb{P}(L>\omega) \simeq \alpha /\omega^{\gamma-1}\,.$
		\label{cor_pl_two_ball}
	\end{Cor}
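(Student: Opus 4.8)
The statement is labelled a corollary to Theorem~4 of \cite{oliveira2009onset}, so one route is simply to verify that the power law feedback function \eqref{pl_feedback_fn} with $\gamma>1$ is a valid feedback function in the sense required there and then read off the conclusion. I would instead give a self-contained argument built on the exponential embedding introduced above, which makes the origin of the exponent $\gamma-1$ transparent. The plan is to treat the two agents as independent pure birth processes $W_t(1),W_t(2)$, each with holding times $H_\omega\sim\mathrm{Exp}(\eta\omega^\gamma)$. Since $\sum_\omega(\eta\omega^\gamma)^{-1}<\infty$ for $\gamma>1$, each agent has an almost surely finite explosion time $T_j=\sum_\omega H_\omega^{(j)}$, with $\E[T_j]=\eta^{-1}\sum_\omega\omega^{-\gamma}<\infty$; by Theorem~\ref{feedback_cases} the monopoly agent is the one that explodes first, and $L$ is the number of balls the other agent holds at time $\min(T_1,T_2)$.

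First I would express the tail of $L$ as a race between explosion times. Writing $\tau^{(2)}_{\omega+1}$ for the time at which agent $2$ first reaches state $\omega+1$ and $R^{(2)}_{\omega+1}=\sum_{k>\omega}H^{(2)}_k$ for its remaining time to explode, so that $T_2=\tau^{(2)}_{\omega+1}+R^{(2)}_{\omega+1}$ with the two summands independent, the event that agent $2$ is the loser holding more than $\omega$ balls is exactly $\{\tau^{(2)}_{\omega+1}\le T_1<T_2\}$. Using the symmetry of the two agents (equal initial conditions as in Figure~\ref{urn_cpp_tails}; a general start only changes the constant) and independence of $T_1$ from agent $2$, this gives
\[
\P(L>\omega)=2\,\E\!\left[F_{T_1}\!\big(\tau^{(2)}_{\omega+1}+R^{(2)}_{\omega+1}\big)-F_{T_1}\!\big(\tau^{(2)}_{\omega+1}\big)\right]
=2\,\E\!\left[\int_{\tau^{(2)}_{\omega+1}}^{\,\tau^{(2)}_{\omega+1}+R^{(2)}_{\omega+1}}\rho_{T_1}(t)\,\diff t\right],
\]
where $F_{T_1}$ and $\rho_{T_1}$ denote the distribution function and density of $T_1$.

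The analytic heart of the argument is to show that as $\omega\to\infty$ the bracket is asymptotic to $\E\big[R^{(2)}_{\omega+1}\big]\,c_0$, where $c_0:=\E[\rho_{T_1}(T_2)]=\int_0^\infty\rho_{T_1}(t)\rho_{T_2}(t)\,\diff t$ is the finite, strictly positive collision density of the two explosion times. Since $R^{(2)}_{\omega+1}\downarrow0$ and is independent of both $\tau^{(2)}_{\omega+1}$ and $T_1$, the inner integral is close to $R^{(2)}_{\omega+1}\,\rho_{T_1}\big(\tau^{(2)}_{\omega+1}\big)$, while $\tau^{(2)}_{\omega+1}\uparrow T_2$; decoupling the small tail then yields $\P(L>\omega)\simeq2c_0\,\E\big[R^{(2)}_{\omega+1}\big]$. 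Finally I would evaluate the mean remaining time by integral comparison,
\[
\E\big[R^{(2)}_{\omega+1}\big]=\sum_{k>\omega}\frac{1}{\eta k^\gamma}\simeq\frac{1}{\eta}\int_\omega^\infty x^{-\gamma}\,\diff x=\frac{1}{\eta(\gamma-1)}\,\omega^{-(\gamma-1)},
\]
which produces the exponent $\gamma-1$ and identifies $\alpha=2c_0/\big(\eta(\gamma-1)\big)>0$.

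The main obstacle is the limit interchange in the previous paragraph: one must control the density $\rho_{T_1}$ of a sum of independent exponentials with summable rates, in particular showing that it exists, is continuous and bounded on the relevant range, and that $c_0$ is finite and positive, and then justify uniformly enough that $\int_a^{a+b}\rho_{T_1}\simeq b\,\rho_{T_1}(a)$ and that $R^{(2)}_{\omega+1}$ decouples from $\tau^{(2)}_{\omega+1}$, so that the ratio in $\simeq$ genuinely tends to $1$ rather than the orders merely matching. A dominated-convergence argument exploiting the explicit exponential structure of the $H^{(j)}_k$, together with the almost sure convergence $\tau^{(2)}_{\omega+1}\to T_2$, should close this gap; this is precisely the content abstracted into Theorem~4 of \cite{oliveira2009onset}.
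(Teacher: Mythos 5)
Your proposal takes a genuinely different route from the paper, because the paper does not prove Corollary \ref{cor_pl_two_ball} on its own at all: its implicit proof is to check that the power law \eqref{pl_feedback_fn} with $\gamma>1$ is a valid feedback function in the sense of \cite{oliveira2009onset} and then quote Theorem 4 there, which already delivers the tail of $L$ as asymptotically proportional to the tail sum $\sum_{k>\omega}1/f(k)$. You instead reconstruct the probabilistic mechanism behind that theorem via the exponential embedding, and your skeleton is correct: for $\gamma>1$ each agent explodes at an a.s.\ finite time, the loser's final count is its count at the winner's explosion time (its later continuous-time jumps occur after all of the winner's jumps and so never appear in the discrete ordering), hence $\{L>\omega,\ \text{agent }2\text{ loses}\}=\{\tau^{(2)}_{\omega+1}\le T_1<T_2\}$; the symmetry factor $2$, the independence manipulations, and the comparison $\E[R^{(2)}_{\omega+1}]=\sum_{k>\omega}(\eta k^{\gamma})^{-1}\simeq\omega^{-(\gamma-1)}/(\eta(\gamma-1))$ are all sound and make transparent exactly where the exponent $\gamma-1$ comes from. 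What your route buys is an explicit interpretation of the constant, $\alpha=2c_0/(\eta(\gamma-1))$ with $c_0=\int_0^\infty\rho_{T_1}(t)\rho_{T_2}(t)\,\diff t$ the collision density of the two explosion times, which is invisible if one merely cites the theorem; what it costs is that the analytic core --- existence, boundedness, continuity and positivity of the density of an infinite sum of independent exponentials, and the decoupling $\E\bigl[\int_{\tau}^{\tau+R}\rho_{T_1}(t)\,\diff t\bigr]\simeq\E[R]\,\E[\rho_{T_1}(T_2)]$ --- is only sketched, and you yourself defer it to Theorem 4, so as written your argument is a well-motivated heuristic reconstruction rather than a complete independent proof. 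For completeness, that gap is fillable by elementary means: $\rho_{T_1}$ is bounded by $f(\omega_0)$ (it is a convolution with an $\mathrm{Exp}(f(\omega_0))$ density) and uniformly continuous, so the average of $\rho_{T_1}$ over $[\tau,\tau+R]$ differs from $\rho_{T_1}(\tau)$ by at most a modulus $\delta(R)$; splitting on $\{R\le\varepsilon\}$ and using Markov's inequality $\P(R>\varepsilon)\le\E[R]/\varepsilon$ together with Cauchy--Schwarz gives $\E[R\,\delta(R)]=o(\E[R])$, independence of $R$ from $(\tau,T_1)$ gives $\E[R\,\rho_{T_1}(\tau)]=\E[R]\,\E[\rho_{T_1}(\tau)]$, and bounded convergence sends $\E[\rho_{T_1}(\tau)]\to c_0\in(0,\infty)$; adding such a lemma would turn your sketch into a genuine self-contained proof (for symmetric starts, with only the constant changing otherwise), which is more than the paper itself provides.
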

	We shall give evidence in Section \ref{sec_tail_sim} through simulations of the feedback process that Corollary \ref{cor_pl_two_ball} holds also for $N \geq 2$ agents.

    We shall now focus on probability mass function solutions to the master equation \eqref{master_eqn}.
    We write down two particular solutions applied to power law feedback \eqref{pl_feedback_fn} for $\gamma=0\,,\,1\,.$ 
	
	\begin{prop}[See Ch.\rom{17}, Sections 1-3 of \cite{feller1967introduction}]
		The solution to the master equation \eqref{master_eqn} with $W_0=\omega_0$ for power law feedback \eqref{pl_feedback_fn} for
		\begin{enumerate}
			\item $\gamma=0$ is Poisson with probability mass function
			\begin{equation*}
			p_t(\omega) = \frac{(\eta t)^{\omega-\omega_0}}{(\omega-\omega_0)!} e^{-\eta t}\,, \quad \omega=\omega_0, \omega_0+1, \dots 
			\end{equation*}
			Note this is the Poisson process.
			\label{master_case1}
			\item $\gamma=1$ is negative binomial with probability mass function 
			\begin{equation*}
			p_t(\omega) = {\omega-1 \choose \omega-\omega_0}
			(1-e^{-\eta t})^{\omega-\omega_0}e^{- \eta \omega_0 t}\,, \quad 
			\omega=\omega_0, \omega_0+1, \dots
			\end{equation*}
			Note this is the Yule process and for the particular case with $\omega_0=1$ 
			gives the geometric distribution which can be approximated by the exponential distribution, see Figure \ref{urn_sum_preds}.
			\label{master_case2}
		\end{enumerate}
		\label{Soln_partic_master}
	\end{prop}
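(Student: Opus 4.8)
The plan is to exploit the triangular structure of the master equation \eqref{master_eqn}: the equation for $p_t(\omega)$ involves only $p_t(\omega)$ and $p_t(\omega-1)$, with the initial data $p_0(\omega)=1$ if $\omega=\omega_0$ and $p_0(\omega)=0$ otherwise, inherited from $W_0=\omega_0$. This lets one solve the system recursively in $\omega$: the base equation is a scalar linear ODE, and each subsequent equation is a first-order linear inhomogeneous ODE with integrating factor $e^{f(\omega)t}$ whose forcing term $f(\omega-1)p_t(\omega-1)$ is already known. Equivalently, and more cleanly, I would substitute each claimed closed form directly into \eqref{master_eqn}, verify it satisfies both the differential relation and the initial data, and invoke uniqueness of the solution to this linear first-order system (e.g.\ Picard--Lindel\"of, or simply the fact that the recursion pins down each $p_t(\omega)$ unambiguously) to conclude.

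For the base case $\omega=\omega_0$ in both parts, \eqref{master_eqn} reads $\frac{\diff}{\diff t}p_t(\omega_0)=-f(\omega_0)p_t(\omega_0)$ with $p_0(\omega_0)=1$, so $p_t(\omega_0)=e^{-f(\omega_0)t}$; this matches $e^{-\eta t}$ when $\gamma=0$ and $e^{-\eta\omega_0 t}$ when $\gamma=1$, as claimed. For Part~\ref{master_case1} ($\gamma=0$, so $f\equiv\eta$), I would set $k=\omega-\omega_0\geq 1$ and differentiate the Poisson form; the product rule yields two terms that rearrange into $\eta\,p_t(\omega-1)-\eta\,p_t(\omega)$, which is exactly the right-hand side of \eqref{master_eqn} because $f(\omega-1)=f(\omega)=\eta$. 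The initial condition follows since $(\eta t)^k/k!\to 0$ as $t\to 0$ for $k\geq 1$.

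For Part~\ref{master_case2} ($\gamma=1$, so $f(\omega)=\eta\omega$) I would write $q=e^{-\eta t}$ and $C=\binom{\omega-1}{k}$ with $k=\omega-\omega_0$, so that $p_t(\omega)=C(1-q)^k q^{\omega_0}$. Differentiating via the product rule, using $\diff q/\diff t=-\eta q$, gives $\frac{\diff}{\diff t}p_t(\omega)=\eta C q^{\omega_0}\bigl[k(1-q)^{k-1}q-\omega_0(1-q)^k\bigr]$, while the target right-hand side is $\eta(\omega-1)p_t(\omega-1)-\eta\omega\,p_t(\omega)$. The main obstacle is the bookkeeping in matching these two expressions: it requires the Pascal-type identity $k\binom{\omega-1}{k}=(\omega-1)\binom{\omega-2}{k-1}$ to convert the coefficient of $p_t(\omega-1)$ into the same binomial factor $C$, turning the target into $\eta C q^{\omega_0}\bigl[k(1-q)^{k-1}-\omega(1-q)^k\bigr]$. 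After cancelling the common factor $\eta C q^{\omega_0}(1-q)^{k-1}$, the required equality reduces to the scalar identity $kq-\omega_0(1-q)=k-\omega(1-q)$, which holds precisely because $k+\omega_0=\omega$.

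Finally I would check the initial condition for Part~\ref{master_case2}: as $t\to 0$ we have $q\to 1$, so $(1-q)^k\to 0$ for $k\geq 1$ while the $k=0$ term gives $p_0(\omega_0)=\binom{\omega_0-1}{0}=1$, recovering the correct point mass at $\omega_0$. With the differential relation verified on $\omega=\omega_0$ and $\omega>\omega_0$ together with the matching initial data, uniqueness then identifies the claimed formulas as the solutions, completing both cases.
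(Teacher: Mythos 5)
Your proposal is correct and takes essentially the same route as the paper, whose proof is simply the one-line instruction to substitute the claimed mass functions into the master equation \eqref{master_eqn} for $\gamma=0$ and $\gamma=1$. You merely carry out that substitution in full detail (including the Pascal-type identity and the initial-condition and uniqueness checks the paper leaves implicit), which is a more complete write-up of the same verification argument.
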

	
	\begin{proof}
		Substitute the probability mass functions into the respective master equation \eqref{master_eqn} for power law feedback \eqref{pl_feedback_fn} for $\gamma = 0$ and $\gamma = 1\,$ . 
	\end{proof}
	The following corollary gives a general solution to the master equation \eqref{master_eqn} under a mild condition on the feedback function  $f\,.$ We provide the proof for this particular case as it is significantly simpler than to a general case which applies to a broader class of equations, see Theorem 1 in \cite{severo1969recursion}.
	\begin{Cor}[Corollary to Theorem 1 in \cite{severo1969recursion}]
		For $W_0=\omega_0$ and provided $f(\omega) \neq f(i)$ for all $i=\omega_0,\, \omega_0+1,\dots, \,\omega-1\,,$ the master equation \eqref{master_eqn} has the following solution
		\begin{equation}
		p_t(\omega) = \sum_{i=\omega_0}^{\omega} a_{\omega,i}e^{- f(i) t}\,, \quad \omega \geq \omega_0
		\label{master_sum_soln}
		\end{equation}
		with $a_{\omega_0,\omega_0} = 1$ and for $\omega >\omega_0$
		\begin{align*}
		a_{\omega,i} &= \frac{f(\omega-1)}{f(\omega)-f(i)}a_{\omega-1,i} \,, \quad i=\omega_0, \, \omega_0+1,\dots, \,\omega-1 \,, \\
		a_{\omega,\omega} &= - \sum_{i=\omega_0}^{\omega-1} a_{\omega,i} \,.
		\end{align*}
		\label{gen_mast_sum}
	\end{Cor}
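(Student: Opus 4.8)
The plan is to prove the formula by induction on $\omega$, treating the master equation for each fixed $\omega$ as a first-order linear ODE whose inhomogeneous forcing term is supplied by the already-known solution at $\omega-1$. For the base case $\omega=\omega_0$ the solution reduces to $p_t(\omega_0)=a_{\omega_0,\omega_0}e^{-f(\omega_0)t}=e^{-f(\omega_0)t}$, and one checks directly that $\frac{\diff}{\diff t}p_t(\omega_0)=-f(\omega_0)p_t(\omega_0)$ with the correct initial condition $p_0(\omega_0)=1$, matching the top line of \eqref{master_eqn}. The governing initial conditions for the induction are $p_0(\omega_0)=1$ and $p_0(\omega)=0$ for all $\omega>\omega_0$, which encode $W_0=\omega_0$.

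For the inductive step, assume \eqref{master_sum_soln} holds at $\omega-1$. I would rewrite the lower line of \eqref{master_eqn} as the linear ODE
\begin{equation*}
\frac{\diff}{\diff t}p_t(\omega)+f(\omega)p_t(\omega)=f(\omega-1)p_t(\omega-1)
\end{equation*}
and multiply through by the integrating factor $e^{f(\omega)t}$ so that the left-hand side becomes $\frac{\diff}{\diff t}\bigl(e^{f(\omega)t}p_t(\omega)\bigr)$. Substituting the induction hypothesis $p_t(\omega-1)=\sum_{i=\omega_0}^{\omega-1}a_{\omega-1,i}e^{-f(i)t}$ gives
\begin{equation*}
\frac{\diff}{\diff t}\bigl(e^{f(\omega)t}p_t(\omega)\bigr)=f(\omega-1)\sum_{i=\omega_0}^{\omega-1}a_{\omega-1,i}\,e^{(f(\omega)-f(i))t}\,.
\end{equation*}

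I would then integrate from $0$ to $t$, using $p_0(\omega)=0$, and here the hypothesis $f(\omega)\neq f(i)$ is exactly what allows each exponential term to be integrated to $\bigl(e^{(f(\omega)-f(i))t}-1\bigr)/(f(\omega)-f(i))$ rather than producing a resonant factor of $t$. Defining $a_{\omega,i}:=\frac{f(\omega-1)}{f(\omega)-f(i)}a_{\omega-1,i}$ for $i=\omega_0,\dots,\omega-1$ and multiplying back by $e^{-f(\omega)t}$ collapses the expression to $\sum_{i=\omega_0}^{\omega-1}a_{\omega,i}e^{-f(i)t}-\bigl(\sum_{i=\omega_0}^{\omega-1}a_{\omega,i}\bigr)e^{-f(\omega)t}$; setting $a_{\omega,\omega}:=-\sum_{i=\omega_0}^{\omega-1}a_{\omega,i}$ then yields \eqref{master_sum_soln} at $\omega$ and closes the induction. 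The only real subtlety—and the step I would flag as the main obstacle—is the non-resonance condition $f(\omega)\neq f(i)$; it is what prevents the integration from introducing polynomial-in-$t$ corrections and is precisely why the corollary must assume it (a genuinely more general argument, as in \cite{severo1969recursion}, is needed without it). A minor bonus is that the definition of $a_{\omega,\omega}$ forces $\sum_i a_{\omega,i}=0$, which automatically reproduces the required initial condition $p_0(\omega)=0$ for $\omega>\omega_0$.
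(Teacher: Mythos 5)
Your proof is correct, but it takes a genuinely different route from the paper. The paper proceeds by verification: it substitutes the ansatz $p_t(\omega)=\sum_{i=\omega_0}^{\omega}a_{\omega,i}e^{-f(i)t}$ into both sides of the master equation \eqref{master_eqn}, matches the coefficients of each exponential $e^{-f(i)t}$ to read off the recursion $a_{\omega,i}=\frac{f(\omega-1)}{f(\omega)-f(i)}a_{\omega-1,i}$, and then imposes the initial conditions $p_0(\omega_0)=1$ and $p_0(\omega)=0$ for $\omega>\omega_0$ to fix $a_{\omega_0,\omega_0}=1$ and $a_{\omega,\omega}=-\sum_{i=\omega_0}^{\omega-1}a_{\omega,i}$. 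You instead construct the solution by induction, treating each level of the hierarchy as a first-order linear ODE forced by the level below and solving it exactly with the integrating factor $e^{f(\omega)t}$. Your approach buys something the paper's does not make explicit: since the integrating factor method produces \emph{the} unique solution of each initial value problem, you get existence and uniqueness simultaneously, and you explain where the exponential-sum form comes from rather than positing it. It also localises the role of the hypothesis $f(\omega)\neq f(i)$ precisely: in your argument it is the non-resonance condition preventing terms proportional to $t\,e^{-f(\omega)t}$ from appearing upon integration, whereas in the paper it surfaces only as the condition allowing division by $f(\omega)-f(i)$. The paper's verification is shorter and mirrors how the result is stated (as a corollary of the known recursion in \cite{severo1969recursion}), but your derivation is self-contained and slightly stronger in logical content. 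One small point of care: when you integrate term by term you implicitly use that the induction hypothesis holds for all $t\geq 0$, not just pointwise, and that $a_{\omega,\omega}$ as you define it automatically encodes $p_0(\omega)=0$; you noted both, so the argument is complete.
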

	
	\begin{proof}
		The solution for $\omega=\omega_0$ to \eqref{master_eqn} is
		\begin{equation*}
		p_t(\omega_0)= e^{-f(\omega_0) t}
		\end{equation*}
		which agrees with the corollary. Now for $\omega>\omega_0$ we have
		substituting \eqref{master_sum_soln} into the LHS of the master equation \eqref{master_eqn} that
		\begin{equation*}
		\frac{\diff}{\diff t} p_t(\omega) = - \sum_{i=\omega_0}^{\omega} a_{\omega,i}f(i)e^{-f(i) t} 
		\end{equation*}
		and into the RHS we have with $a_{\omega-1,\omega}:=0$
		\begin{equation*}
		p_t(\omega-1)f(\omega-1)-p_t(\omega)f(\omega)=  \sum_{i=\omega_0}^{\omega}(f(\omega-1)a_{\omega-1,i}-f(\omega)a_{\gamma,\omega,i})e^{-f(i) t} \,.
		\end{equation*}
		Setting the summands of the LHS and RHS equal to each other we have for $\omega>\omega_0\,$
		\begin{align*}
		-f(i) a_{\omega,i} &= f(\omega-1)a_{\omega-1,i}-f(\omega)a_{\omega,i} \Leftrightarrow \\
		a_{\omega,i} &= \frac{f(\omega-1)}{f(\omega)-f(i)} a_{\omega-1,i}\,, \quad i= \omega_0, \, \omega_0+1, \dots, \omega-1 \,. 
		\end{align*}
		provided $f(\omega) \neq f(i)$ for all $i=\omega_0,\, \omega_0+1,\dots, \,\omega-1\,.$
		
		As $W_0=\omega_0$ we have that $p_0(\omega) = \sum_{i=\omega_0}^{\omega} a_{\omega,i} = 0$ for all $\omega>\omega_0\,.$ This implies $a_{\omega,\omega} = - \sum_{i=\omega_0}^{\omega-1} a_{\gamma,\omega,i}$
		for all $\omega>\omega_0\,.$ Finally we also have $p_0(\omega_0) =a_{\omega_0,\omega_0} = 1\,.$
		
	\end{proof}
	
	From \eqref{master_sum_soln} the probability any agent is finite in infinite time is zero which agrees with the fact that the feedback process is a pure birth process.
	\begin{Cor}
		For all
		$\omega \in \N$ 
		\begin{equation}
		p_{\infty}(\omega):=\lim_{t \rightarrow \infty} p_{t}(\omega) = 0 \,.
		\label{limit_mass_fn}
		\end{equation}
		\label{cor_mast_part}
	\end{Cor}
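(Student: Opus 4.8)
The plan is to read the result straight off the explicit series solution \eqref{master_sum_soln} established in Corollary \ref{gen_mast_sum}. The single fact doing all the work is that the feedback function is strictly positive, $f:\N \rightarrow \R_{>0}$, so that $f(i)>0$ for every index $i$ and hence each exponential factor $e^{-f(i)t}$ decays to $0$ as $t\rightarrow\infty$.

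First I would dispose of the trivial range $\omega<\omega_0$. Since the feedback process is a pure birth process started at $W_0=\omega_0$, the state can only increase, so $p_t(\omega)=0$ for all $t\geq 0$ whenever $\omega<\omega_0$, and the limit is $0$ immediately. For the substantive range $\omega\geq\omega_0$ I would pass to the limit termwise in \eqref{master_sum_soln}. Because the index set $i=\omega_0,\omega_0+1,\dots,\omega$ is finite, the limit of the sum equals the sum of the limits, each coefficient $a_{\omega,i}$ being a fixed constant, and with $f(i)>0$ one has $\lim_{t\rightarrow\infty}e^{-f(i)t}=0$ for every such $i$, giving
\begin{equation*}
\lim_{t\rightarrow\infty}p_t(\omega)=\sum_{i=\omega_0}^{\omega}a_{\omega,i}\lim_{t\rightarrow\infty}e^{-f(i)t}=0\,.
\end{equation*}

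There is no genuine obstacle here: the only points that merit a word of care are the separate handling of $\omega<\omega_0$ and the legitimacy of interchanging limit and summation, which rests purely on the finiteness of the sum rather than on any convergence subtlety. I would also note that the distinctness hypothesis $f(\omega)\neq f(i)$ required for \eqref{master_sum_soln} is inessential to the conclusion: in the non-injective case the solution merely picks up polynomial prefactors of the form $t^k e^{-f(i)t}$, which still vanish as $t\rightarrow\infty$, so the statement in fact holds for any positive feedback function.
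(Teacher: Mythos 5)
Your proof is correct and follows exactly the paper's intended argument: the paper gives no formal proof but derives the corollary directly from the finite sum formula \eqref{master_sum_soln}, where each term $a_{\omega,i}e^{-f(i)t}$ vanishes as $t \rightarrow \infty$ since $f(i)>0$. Your additional remarks (the trivial case $\omega<\omega_0$ and the dispensability of the distinctness hypothesis) are sound refinements but do not change the route.
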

	
	\begin{Egs*}
		To illustrate Corollary \ref{gen_mast_sum} let us check the sum formula \eqref{master_sum_soln} for the power law feedback \eqref{pl_feedback_fn} with $\gamma=1$ ($f(\omega)=\eta \omega$) has the negative binomial probability solution found in Proposition \ref{Soln_partic_master}.  The negative binomial mass function in Proposition \ref{Soln_partic_master} can be expanded so that
		\begin{align*}
		p_t(\omega) &= {\omega-1 \choose \omega-\omega_0}
		(1-e^{-\eta t})^{\omega-\omega_0}e^{-\omega_0 \eta t} \\
		&= \sum_{i=\omega_0}^{\omega} {\omega-1 \choose \omega-\omega_0}{\omega -\omega_0 \choose i-\omega_0 }(-1)^{i-\omega_0}e^{-\eta i  t } \,.
		\end{align*}
		Taking 
		\begin{equation*}
		a_{\omega,i}={\omega-1 \choose \omega-\omega_0}{\omega -\omega_0 \choose i-\omega_0 }(-1)^{i-\omega_0} \quad  \text{for } i=\omega_0,\omega_0+1,\dots,\omega
		\label{geom_coef}
		\end{equation*}
		we shall show that these coefficients follow the rules in Corollary \ref{gen_mast_sum}. First we see $a_{1,\omega_0,\omega_0}=1$ and for $\omega>\omega_0$ it can be simply shown that
		\begin{equation*}
		a_{\omega,i} = \frac{f(\omega-1)}{f(\omega)-f(i)}a_{\omega-1,i}=\frac{\omega-1}{\omega-i}a_{\omega-1,i}\,, \quad i=\omega_0,\,\omega_0+1,\dots,\,\omega-1
		\end{equation*}
		Also  
		\begin{equation*}
		\sum_{i=\omega_0}^{\omega} {\omega-1 \choose \omega-\omega_0}{\omega -\omega_0 \choose i-\omega_0 }(-1)^{j-\omega_0} = {\omega-1 \choose \omega-\omega_0} \sum_{i=0}^{\omega-\omega_0} {\omega -\omega_0 \choose i}(-1)^{i}=(1-1)^{\omega-\omega_0} =0
		\end{equation*}
		and thus $a_{\omega,\omega} = - \sum_{i=\omega_0}^{\omega-1} a_{\omega,i} \,.$
	\end{Egs*}
	
	\section{Approximating the Probability Mass Function of the Feedback Process with Power Law Feedback}
	
		\label{sec_master_eqn}

We first present two further corollaries to Corollary \ref{gen_mast_sum}. 
	
	\begin{Cor}
		For the $a_{\omega,i}$ defined in Corollary \ref{gen_mast_sum} we have for $i=\omega_0, \, \omega_0+1, \, \dots ,\, \omega-1$
		\begin{equation*}
		a_{\omega,i} = 
		\frac{f(i)}{f(\omega)} 	\left(\prod_{j=i+1}^{\omega} \frac{1}{1-f(i)/f(j)}\right) a_{i,i} \,.
		\end{equation*}
		\label{cor_summands}
	\end{Cor}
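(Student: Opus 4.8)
The plan is to fix $i$ and induct on $\omega \geq i+1$, peeling off one step of the recursion $a_{\omega,i} = \frac{f(\omega-1)}{f(\omega)-f(i)}\,a_{\omega-1,i}$ from Corollary~\ref{gen_mast_sum} at each stage. Equivalently, one may telescope the recursion all the way down to the diagonal coefficient, obtaining
\begin{equation*}
a_{\omega,i} = \left(\prod_{k=i+1}^{\omega} \frac{f(k-1)}{f(k)-f(i)}\right) a_{i,i}\,,
\end{equation*}
and then show this agrees with the claimed expression by an algebraic rearrangement of the factors.

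For the base case $\omega=i+1$, the single recursion step gives $a_{i+1,i} = \frac{f(i)}{f(i+1)-f(i)}a_{i,i}$, and the claimed formula reduces to the same thing after writing $\frac{1}{1-f(i)/f(i+1)} = \frac{f(i+1)}{f(i+1)-f(i)}$. For the inductive step I would substitute the inductive hypothesis for $a_{\omega-1,i}$ into the recursion; the factor $f(\omega-1)$ cancels immediately, leaving $\frac{f(i)}{f(\omega)-f(i)}$ times the product up to $j=\omega-1$. The key manipulation is then to recognise $\frac{f(i)}{f(\omega)-f(i)} = \frac{f(i)}{f(\omega)}\cdot\frac{1}{1-f(i)/f(\omega)}$, so that the extra factor $\frac{1}{1-f(i)/f(\omega)}$ is exactly the missing $j=\omega$ term of the product and the prefactor $1/f(\omega-1)$ is correctly replaced by $1/f(\omega)$.

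This is essentially bookkeeping, so I do not expect a genuine obstacle; the only care needed is in matching the product index ranges and in consistently using the rewriting $\frac{1}{1-f(i)/f(j)} = \frac{f(j)}{f(j)-f(i)}$. I would also note in passing that every denominator $f(j)-f(i)$ appearing here is nonzero under the non-degeneracy condition $f(\omega)\neq f(i)$ carried over from Corollary~\ref{gen_mast_sum} (which holds in particular for the strictly increasing power law feedback \eqref{pl_feedback_fn} with $\gamma>0$), so each $a_{\omega,i}$ and the stated product are well defined.
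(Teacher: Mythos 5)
Your proposal is correct and takes essentially the same approach as the paper: the paper unfolds the recursion $a_{\omega,i} = \frac{f(\omega-1)}{f(\omega)-f(i)}a_{\omega-1,i}$ all the way down to $a_{i,i}$ (written with dots rather than as a formal induction) and then regroups the factors using exactly the rewriting $\frac{1}{1-f(i)/f(j)} = \frac{f(j)}{f(j)-f(i)}$ that you identify. Your induction is simply the rigorous formulation of that telescoping, with the factor rearrangement done one step at a time instead of all at once.
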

	
	\begin{proof}
			\begin{align*}
			a_{\omega,i} &= \frac{f(\omega-1)}{f(\omega)-f(i)}
			a_{\omega-1,i} \\
			&= \frac{f(\omega-1)}{f(\omega)-f(i)}
			\frac{f(\omega-2)}{f(\omega-1)-f(i)} a_{\omega-2,i} \\
			&\vdots \\
			&= \frac{f(\omega-1) f(\omega-2) \dots f(i)}
			{(f(\omega)-f(i))(f(\omega-1)-f(i))
				\dots (f(i+1)-f(i))} a_{i,i} \\
			&= \left(\frac{f(i)}{f(\omega)}\right)
			\frac{f(i+1) \dots f(\omega-1)}
			{(f(i+1)-f(i)) \dots (f(\omega-1)-f(i))} \left(\frac{1}{1-(f(i)/f(\omega)} \right) a_{i,i}
			\\
			&= \frac{f(i)}{f(\omega)} 	\left(\prod_{j=i+1}^{\omega} \frac{1}{1-f(i)/f(j)}\right) a_{i,i} \,.
			\end{align*}
	\end{proof}
	
	We note that there does not seem to be any obvious general representation of the $a_{i,i}\,.$ With the $a_{\omega,i}$ defined in Corollary \ref{gen_mast_sum} and under the same mild condition on $f$ we have that
	\begin{Cor}
		For $W_0=\omega_0$ and $\omega>\omega_0$ the master equation \eqref{master_eqn} has solution
		\begin{equation}
		p_t(\omega) = 
		e^{-f(\omega_0) t} \left(\prod_{j=\omega_0+1}^{\omega} \frac{1}{1-f(\omega_0)/f(j)}\right)
		 f(\omega_0)/f(\omega) +\sum_{i=\omega_0+1}^{\omega} a_{\omega,i}e^{- f(i) t} \,.
		\label{master_approx_soln}
		\end{equation}
	\end{Cor}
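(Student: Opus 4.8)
The plan is to obtain \eqref{master_approx_soln} by isolating the lowest-index term in the recursive solution of Corollary \ref{gen_mast_sum} and evaluating its coefficient $a_{\omega,\omega_0}$ in closed form via Corollary \ref{cor_summands}. No new analytic input is needed: the statement is essentially a repackaging of the two preceding corollaries, so the whole argument is a short substitution.

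First I would start from the solution \eqref{master_sum_soln}, valid for $\omega \geq \omega_0$ under the stated condition $f(\omega) \neq f(i)$, and split off the summand with $i=\omega_0$:
$$p_t(\omega) = a_{\omega,\omega_0}\, e^{-f(\omega_0) t} + \sum_{i=\omega_0+1}^{\omega} a_{\omega,i}\, e^{-f(i) t}\,.$$
The second sum already coincides verbatim with the tail of \eqref{master_approx_soln}, so the entire task reduces to identifying the scalar $a_{\omega,\omega_0}$ with the product prefactor that multiplies $e^{-f(\omega_0)t}$.

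Next I would apply Corollary \ref{cor_summands} with the choice $i=\omega_0$. That corollary states, for each $i \in \{\omega_0, \dots, \omega-1\}$,
$$a_{\omega,i} = \frac{f(i)}{f(\omega)} \left( \prod_{j=i+1}^{\omega} \frac{1}{1 - f(i)/f(j)} \right) a_{i,i}\,,$$
and since $\omega>\omega_0$ the index $i=\omega_0$ lies in the admissible range. Substituting $i=\omega_0$ and using the normalisation $a_{\omega_0,\omega_0}=1$ from Corollary \ref{gen_mast_sum} gives
$$a_{\omega,\omega_0} = \frac{f(\omega_0)}{f(\omega)} \prod_{j=\omega_0+1}^{\omega} \frac{1}{1 - f(\omega_0)/f(j)}\,,$$
which is precisely the coefficient appearing in \eqref{master_approx_soln}. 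Inserting this back into the split expression completes the derivation.

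There is no genuine obstacle here; the only points requiring a moment's care are bookkeeping ones. I would confirm that the product index range $j=\omega_0+1,\dots,\omega$ transfers unchanged from Corollary \ref{cor_summands}, check that the hypothesis $\omega>\omega_0$ indeed places $i=\omega_0$ inside the range covered by that corollary (the degenerate case $\omega=\omega_0$ being excluded from the statement), and recall that $a_{\omega_0,\omega_0}=1$ rather than some other constant. Once these are verified the identity is immediate.
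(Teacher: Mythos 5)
Your proposal is correct and matches the paper's own proof exactly: split off the $i=\omega_0$ term from \eqref{master_sum_soln}, then evaluate $a_{\omega,\omega_0}$ via Corollary \ref{cor_summands} together with the normalisation $a_{\omega_0,\omega_0}=1$. Nothing to add.
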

	
	\begin{proof}
	For $\omega>\omega_0$ we can write \eqref{master_sum_soln} as
	\begin{equation*}
	p_t(\omega) = a_{\omega,\omega_0} e^{-f(\omega_0) t}
	+\sum_{i=\omega_0+1}^{\omega} a_{\omega,i}e^{- f(i) t} \,.
	\end{equation*}
	Now as $a_{\omega_0,\omega_0}=1$ from Corollary \ref{cor_summands} we have
	\begin{equation*}
	a_{\omega,\omega_0} = \frac{f(\omega_0)}{f(\omega)} 	\left(\prod_{j=\omega_0+1}^{\omega} \frac{1}{1-f(\omega_0)/f(j)}\right)  \,.
	\end{equation*}
	\end{proof}
	
	We find from simulations that for $\omega_0=1\,$, applied to the power law feedback function \eqref{pl_feedback_fn} with $\gamma>1\,$, and large enough $t\,,$ the first term of \eqref{master_approx_soln} (equivalently of \eqref{master_sum_soln})
	\begin{equation}
	\hat{p}_t(\omega) =	e^{-\eta \omega_0^{\gamma} t} \left(\prod_{j=\omega_0+1}^{\omega} \frac{1}{1-(\omega_0/j)^{\gamma}}\right)
	 (\omega_0/\omega)^{\gamma} \,.
	\label{master_pred_approx}
	\end{equation}
	is a good approximation to $p_t(\omega)$ \eqref{master_sum_soln},
	 see Figures \ref{urn_sum_pred_t} and \ref{urn_sum_preds}.  We note that \eqref{master_pred_approx} can be embedded into a regularly varying function, see the Appendix, 
	and so is dominated asymptotically\footnote{If we say a function $f(\omega)$ is asymptotically dominated by $\omega^{-\gamma}$ or asymptotically scales with $\omega^{-\gamma}$ we mean $f(\omega)$ is regularly varying with index $-\gamma\,,$ see the Appendix.} by $\omega^{-\gamma}\,.$ 
	For $\omega_0>1$ the estimate \eqref{master_pred_approx}  generally fails to be a good approximation but $p_t(\omega)$ still seems to asymptotically scale as $\omega^{-\gamma}$ for $\gamma>1\,$, see Figure \ref{urn_sum_pred_t1}. This indicates that the factor $1/f(\omega)$ seen in Corollary \ref{cor_summands}, at least for power law feedback \eqref{pl_feedback_fn}, plays a dominant role asymptotically in \eqref{master_sum_soln}.

	
	\begin{figure}[H]
		\centering
		\begin{minipage}{.47\textwidth}
	\centering
	\includegraphics[width=0.9\textwidth]{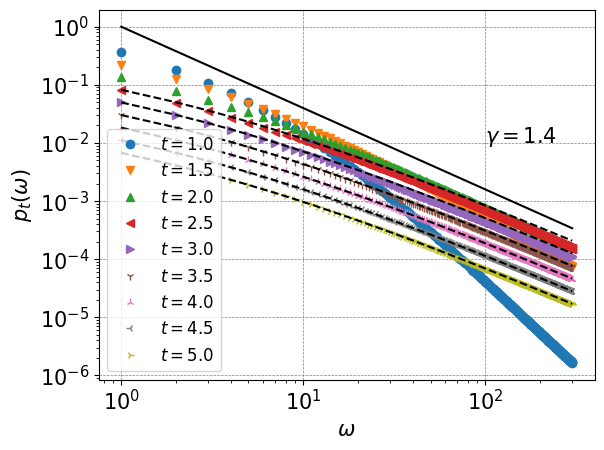}
	\caption{Numerical solution of the probability mass function sum formula solution \eqref{master_sum_soln} for $\omega_0=1\,$, for power law feedback \eqref{pl_feedback_fn} with $\eta=1$ and $\gamma=1.4$ for $t=1.0$ to $t=5.0$ in $0.5$ increments up to $\omega=300$ (for greater $\omega$ the simulation breaks down due to the magnitudes within the summands). Black dashed lines are the approximations \eqref{master_pred_approx} for $t\geq 2.5\,.$ Solid black line proportional to $\omega^{-\gamma}$ for comparison.} 
	\label{urn_sum_pred_t}
		\end{minipage}%
		\hfill
		\begin{minipage}{.47\textwidth}
	\centering
	\includegraphics[width=0.9\textwidth]{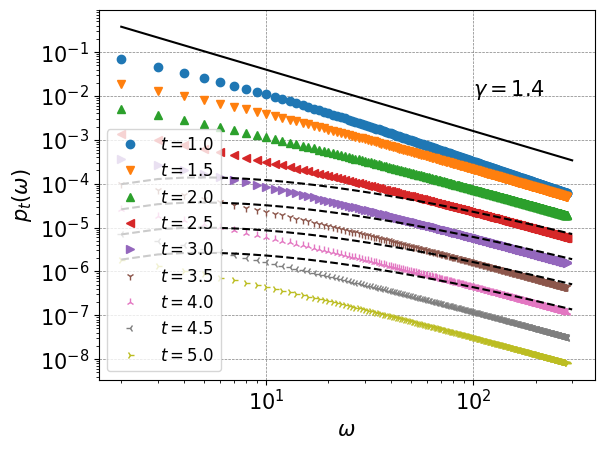}
	\caption{Numerical solution of the probability mass function sum formula solution \eqref{master_sum_soln} for $\omega_0=2\,$, for power law feedback \eqref{pl_feedback_fn} with $\eta=1$ and $\gamma=1.4$ for $t=1.0$ to $t=5.0$ in $0.5$ increments up to $\omega=300\,$. Black dashed lines are the approximations \eqref{master_pred_approx} for $t\geq 3.5\,$ which we see are not good. Solid black line proportional to $\omega^{-\gamma}$ for comparison. \newline  } 
	\label{urn_sum_pred_t1}
		\end{minipage}
	\end{figure}

	\begin{figure}[H]
	\centering
	\captionsetup{justification=centering}
	\includegraphics[width=0.45\textwidth]{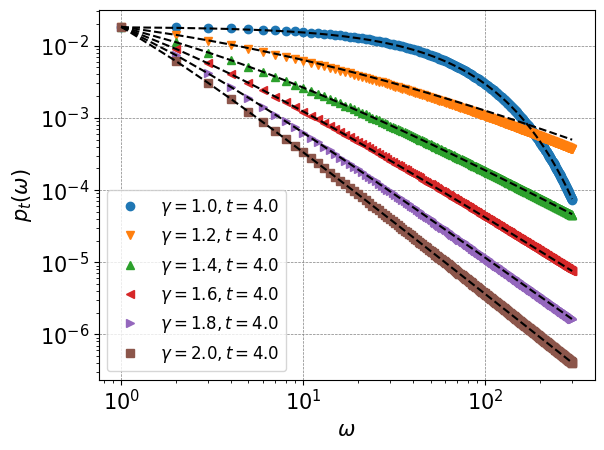}
	\caption{Numerical solution of the probability mass function sum formula solution \eqref{master_sum_soln} for $\omega_0=1\,$, for power law feedback \eqref{pl_feedback_fn} with $\eta=1$ and $\gamma$ going up in $0.2$ increments from $1$ to $2\,$ up to $\omega=300\,$. The black dashed line for $\gamma=1$ is an exponential fit approximating the geometric seen in Proposition \ref{Soln_partic_master}, while for $\gamma>1$ the black dashed lines are the approximations \eqref{master_pred_approx}.}
	\label{urn_sum_preds}
	\end{figure}
	
	\section{Approximating the Tail of the Feedback and Loser Process with Power Law Feedback}
	\label{sec_tail_sim}
	
	The time taken for an agent $W_t$ to visit all states $\omega_0, \, \omega_0 +1 , \dots $ is the cumulative sum of the holding times and described by the random variable
	\begin{equation*}
	T= \sum_{\omega=\omega_0}^{\infty} H_{\omega} \,.
	\end{equation*}
	If $T$ is finite this is called the \textbf{explosion time} and coincides with the monopoly agent case in Theorem \ref{feedback_cases}, Case \ref{feed_thm1},  (Theorem 1.1 in \cite{oliveira2008balls}). 
	We note that the expected explosion time is\footnote{Where linearity of expectation holds as a consequence of monotone convergence, see for example Section 5.6 of \cite{stirzaker1992probability}.}
		\begin{equation}
		\E[T] = \sum_{\omega=\omega_0}^{\infty} \E[H_{\omega}]
		=\sum_{\omega=\omega_0}^{\infty} \frac{1}{|g(\omega,\omega)|} =
		\sum_{\omega=\omega_0}^{\infty} \frac{1}{f(\omega)}\,.
		\label{exp_explosion}
		\end{equation}
		
	 Using the integral test, see e.g. Theorem 8.23 \cite{apostol_analysis1974}, we can bound the expected explosion time \eqref{exp_explosion} for the feedback process with power law feedback \eqref{pl_feedback_fn} and $\gamma>1$ as
		\begin{equation}
		t_{\gamma} :=\frac{1}{\eta \omega_0^{\gamma-1}(\gamma-1)} \leq 
		\sum_{\omega=\omega_0}^{\infty} \frac{1}{\eta \omega^{\gamma}} \leq 
		\frac{1}{\eta \omega_0^{\gamma}}+\frac{1}{\eta \omega_0^{\gamma-1}(\gamma-1)}
		\,
		\label{exp_explosion_pred}
		\end{equation}
		where $t_{\gamma}$ is defined as the lower bound of the expected explosion time. Notice from \eqref{exp_explosion_pred} that larger $\gamma$ gives a smaller expected explosion time.
		
	As discussed in Section 3.2 of \cite{oliveira2009onset} every agent will have a unique explosion time thus creating an ordering on the times an agent will explode. However as stated in Remark 2 in \cite{oliveira2009onset} one can only relate the feedback process to the feedback model up to the first explosion time.
	
	Now we would like to make inference on the losers $\textbf{L}$, see Definition \ref{def_losers}, in the feedback model by comparing to the feedback process.
	Let us define the loser process $\{\mathbf{L}_t \, : \, t=t_n\}$, where $t_n$ are the jump times, as a subset of the feedback process that have not exploded up to time $t$ i.e.
	\begin{equation*}
	\mathbf{L}_t = \{L_t\}=\{W_t \, : \, W_t < \infty\} \subseteq  \mathbf{W}_t\,.
	\end{equation*} 
	We now propose that $\mathbf{L}_t$ is a proxy for $\mathbf{L}$ within some time interval where an agent has exploded but not all agents have exploded. We note that for small enough $t$ no agents will have exploded so that $\mathbf{L}_t=\mathbf{W}_t$ and for very large $t$ all agents explode so that $ \mathbf{L}_t \rightarrow \emptyset$ (the empty set) as $t \rightarrow \infty\,.$
	
	We can simulate an agent $W_t$ of the feedback process and approximate an agent $L_t$ of the loser process as follows:
	\begin{enumerate}[label=\textbf{S.\arabic*}]
		\item Set $t_0=0$ and $W_0=\omega_0$ and set a jump time upper bound 
		$t_M<\infty$ and a  number of balls the agent collects upper bound $\omega_M< \infty\,.$ \label{sim1}
		\item Choose $H_{\omega_0} \sim \text{Exp}(f(\omega_0))$ and set $t_1=H_{\omega_0}$ and 
		$W_{t_1}=\omega_0+1\,.$ \label{sim2}
		\item Repeat \ref{sim2} so that in general $t_{n+1}=t_n+H_{\omega_n}$ and
		$W_{t_{n+1}}=W_{t_{n}}+1=\omega_0+(n+1)\,.$ \label{sim3}
		\item Stop at jump $n=k$ when either $t_{k}>t_M$ or $W_{t_{k}}>\omega_M\,.$ \label{sim4}
		\item A loser agent $L_t$  is approximated as an agent $W_t$ that has reached the time threshold before the ball threshold ($t_k>t_M$ but $W_{t_{k}} \leq \omega_M\,.$) In other words those agents $W_t$ that reach the ball threshold before the time threshold ($t_k \leq t_M$ but $W_{t_{k}}>\omega_M$) are assumed to be agents that have exploded by time $t_M$ and are removed. 
		\label{sim5}
	\end{enumerate}

	We can approximate the tail of the feedback process\footnote{Where $\sum_{k=\omega_0}^{\omega_0 -1}p_t(k) := 0$ and $\sum_{k=\omega_0}^{\omega_0 -1}\hat{p}_t(k) :=0\,.$}
	$\P(W_t \geq  \omega)= 1-\sum_{k=\omega_0}^{ \omega -1 } p_t(k)$
	using \eqref{master_pred_approx}:
	\begin{equation}
	\hat{\P}(W_t \geq  \omega) 
	= 1-\sum_{k=\omega_0}^{ \omega -1 } \hat{p}_t(k) \,.
	\label{tail_pred}
	\end{equation}
	From Corollary \ref{cor_mast_part} we can see for finite $\omega$ that
	$\lim_{t \rightarrow \infty} \P(W_t \geq \omega) = 1 \,.$
	 Figure \ref{wg2} shows the empirical tail of the aggregate of simulations \ref{sim1}-\ref{sim4} of $W_t$ for power law feedback \eqref{pl_feedback_fn} with $\gamma=2$ and Figure \ref{urn_tail_approx_g2} shows it's approximation \eqref{tail_pred}. We see a similar structure between the figures however the approximations are not perfect.
	 
	 Figure \ref{lg2} shows the empirical tail of the aggregate of the same simulations in Figure \ref{wg2} but with \ref{sim5} added at each simulation to remove the explosive agents and leave $L_t\,.$ Figure \ref{urn_agg_tails2} shows the aggregate of simulations \ref{sim1}-\ref{sim5} for different $\gamma$ values for power law feedback \eqref{pl_feedback_fn} and plots the empirical tail of $L_t\,.$ Both Figures \ref{lg2} and \ref{urn_agg_tails2} gives evidence that the losers scale as $\omega^{-(\gamma-1)}$ in support of Corollary \ref{cor_pl_two_ball} (Theorem 4 in \cite{oliveira2009onset} applied to the power law feedback function \eqref{pl_feedback_fn}) but for $N \geq 2$ agents.

		\begin{figure}[H]
			\centering
			\begin{minipage}{.47\textwidth}
			\centering
				\includegraphics[width=0.9\textwidth]{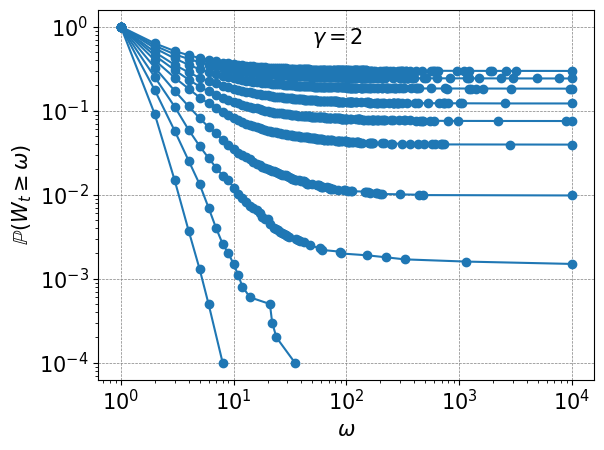}
				\caption{Empirical tail of aggregate of $10^4$ simulations of $W_t$, \ref{sim1}-\ref{sim4}, $\omega_0=1\,$, for power law feedback function \eqref{pl_feedback_fn} with $\eta=1$ and $\gamma=2\,$ until $t=t_M$ or $W_t=\omega_M=10^4\,$. The $t_M$ are chosen as $t_{\gamma}/10=0.1$ to $t_{\gamma}=1$ in increments of $0.1\,.$
				Higher $t$ pushes the tail monotonically upwards to $\P(W_t \geq \omega)=1\,$. }
				\label{wg2}
			\end{minipage}%
			\hfill
			\begin{minipage}{.47\textwidth}
			\centering
			\includegraphics[width=0.9\textwidth]{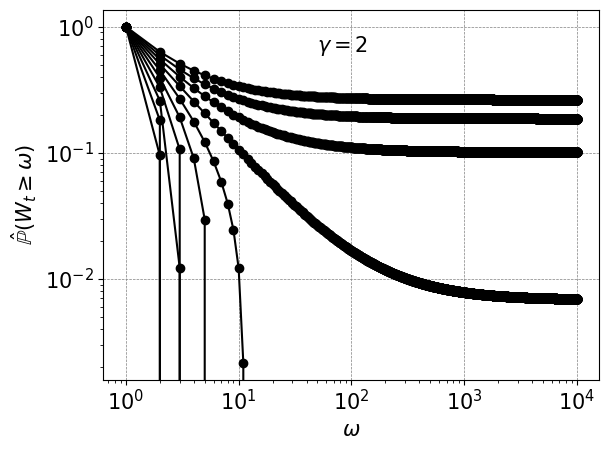}
			\caption{Plot of predicted tail \eqref{tail_pred}  for $W_t\,, \, \omega_0=1$ for power law feedback function \eqref{pl_feedback_fn} with $\eta=1$ and $\gamma=2\,$ and $t= t_{\gamma}/10=0.1$ to $t=t_{\gamma}=1$ in increments of $0.1\,.$ Higher $t$ pushes the predictions monotonically upwards to $\hat{\P}(W_t \geq \omega)=1\,$. We note there is error in the prediction (negative $\hat{\P}(W_t \geq \omega)$) for small $t\,.$
			 }
			\label{urn_tail_approx_g2}
			\end{minipage}
		\end{figure}
		
		\begin{figure}[H]
			\centering
			\begin{minipage}{.47\textwidth}
				\centering
				\includegraphics[width=0.9\textwidth]{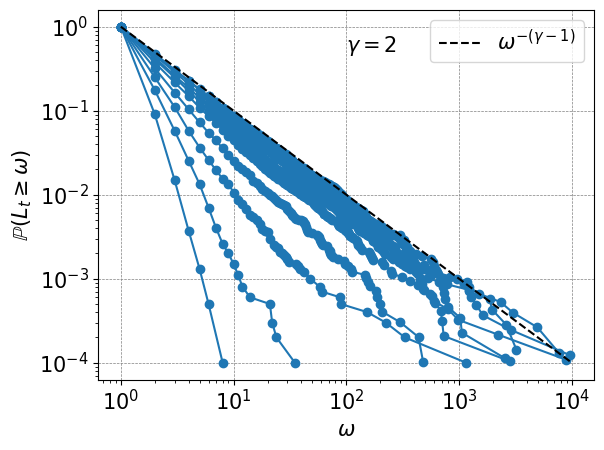}
				\caption{Plot of empirical tail of $L_t\,, \, \omega_0=1$ for power law feedback function \eqref{pl_feedback_fn} with $\eta=1$ and $\gamma=2\,.$ The $L_t$ are  $W_t$ used in Figure \ref{wg2} but adding step \ref{sim5} to exclude the $W_t$ that are likely to have exploded by $t_M$ i.e. those that reach $\omega_M=10^4$ before $t=t_M$ where $t_M=t_{\gamma}/10=0.1$ to $t_{\gamma}=1$ in increments of $0.1\,.$ Higher $t=t_M$ pushes \newline  $\hat{\P}(L_t \geq \omega)\,$ monotonically upwards to the dashed black line proportional to $w^{-(\gamma-1)}\,$. 
				 }
				\label{lg2}
			\end{minipage}
			\hfill
			\begin{minipage}{.47\textwidth}
				\centering
				\includegraphics[width=0.9\textwidth]{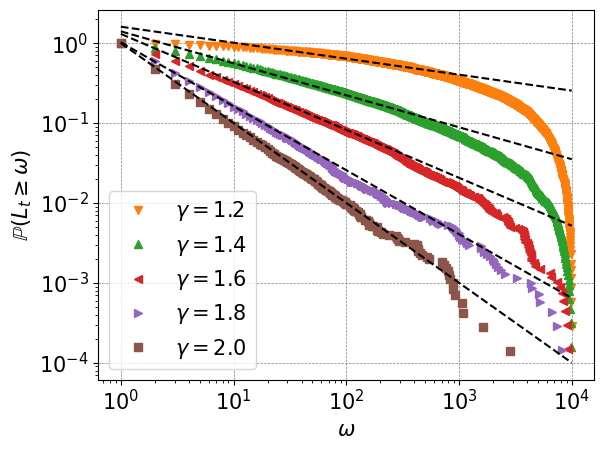}
				\caption{Empirical tail of $L_t$ from an aggregate of $10^4$ simulations of \ref{sim1}-\ref{sim5}, $\omega_0=1\,$, for power law feedback function \eqref{pl_feedback_fn} with $\eta=1$ and $\gamma$ going up in $0.2$ increments from $1.2$ to $2\,$ run until  $t_{M}=t_{\gamma}=1/(\gamma-1)$  but $W_t \leq \omega_M=10^4\,$. The black dashed lines are proportional to $w^{-(\gamma-1)}\,$.   \newline	\newline \newline}
				\label{urn_agg_tails2}
			\end{minipage}%
		\end{figure}
		
	\section{Conclusion}

	The form of the recursive mass function solution \eqref{master_sum_soln} to the master equation for the feedback process is not particularly tractable. However the first term \eqref{master_pred_approx} is analytical and for finite time and for the power law feedback function \eqref{pl_feedback_fn} with $\gamma>1$ is a regularly varying sequence asymptotically dominated by $\omega^{-\gamma} \propto 1/f(\omega) \,.$ 
	We showed numerically in Section \ref{sec_master_eqn} that this first term provides a good approximation under certain parametrisations. 
	We also provided simulations in Section \ref{sec_tail_sim} suggesting the tail of the losers scale as  $\omega^{-(\gamma-1)}$ for general $N \geq 2$ agents. This gives partial evidence that the corollary to Theorem 4 in Oliveira \cite{oliveira2009onset}, stated in Section \ref{sec_overview},  holds for power law feedback and $N \geq 2$ agents. As power law distributions appear  in many real world phenomena with possible feedback mechanisms we believe these are interesting results. 
	 
	 

	
	\renewcommand{\bibname}{References} 
	
	\bibliography{mybib}
	
	\bibliographystyle{apa}

\appendix
			
		\section*{Appendix: Regularly Varying Functions and Sequences}
		\label{app_reg_var}
		
		For a background to regular variation see  \cite{bingham1989regular}.
		\begin{Def}
			A positive, measureable function $f$ defined on $[k,\infty)$ for some $k\geq0$ is \textbf{regularly varying} if for all $\lambda>0$
			\begin{equation*}
			\lim_{x \rightarrow \infty} \frac{f(\lambda x)}{f(x)} = \lambda^{\rho}
			\end{equation*}
			for some index $\rho \in \R\,.$
			\label{def_reg_var}
		\end{Def}
		
		For a function $l$ satisfying the conditions in Definition \ref{def_reg_var} with the particular case of $\rho=0$ so that
			\begin{equation*}
			\lim_{x \rightarrow \infty} \frac{l(\lambda x)}{l(x)} = 1
			\end{equation*}
		then $l$ is called \textbf{slowly varying}. In particular $l(x):= f(x)/x^{\rho}$ where $f$ is some regularly varying function with index $\rho$ is a slowly varying function. Thus any regularly function $f$ can be written as
		\begin{equation*}
		f(x) = l(x)x^{\rho}
		\end{equation*}
		where $l$ is a slowly varying function. By the representation theorem we have for all $\gamma>0$ that
		$l(x)x^{-\gamma} \rightarrow 0$ as $x \rightarrow \infty$ thus if the index is $\rho=-\gamma<0$ then the regularly varying function $f(x)=l(x)x^{-\gamma}$ is dominated asymptotically by $x^{-\gamma}\,.$ 
		
		Now suppose we have a sequence $\{\theta(\omega) \, : \, \omega=\omega_0, \omega_0+1, \dots\}:=\{\theta(\omega)\}$ of positive terms. 
		
		\begin{Def}[See \cite{galambos1973regularly}]
			We call the sequence $\{\theta(\omega)\}$ a \textbf{regularly varying sequence} if there exists a sequence $\{\alpha(\omega)\}$ of positive terms such that
			\begin{enumerate}
				\item \begin{equation*}
				\theta(\omega) \simeq k \alpha(\omega) \quad \text{some } k>0
				\end{equation*}
				\item 
				\begin{equation*}
				\lim_{\omega \rightarrow \infty} 
				\omega(1-\alpha(\omega-1)/\alpha(\omega)) = \rho \quad \text{some index } \rho \in \R
				\end{equation*}
			\end{enumerate}
		\end{Def}
		
		If $\{\theta(\omega)\}$ is a regular varying sequence with index $\rho$ then it can be embedded into a regularly varying function with index $\rho\,$ \cite{galambos1973regularly}. In particular the function $f(x)=\theta(\lfloor x \rfloor)$ for $x \in [\omega_0,\infty)$ is a regularly varying function with index $\rho\,$ \cite{galambos1973regularly}.
		
		Let us show that the sequence $\{\hat{p}(\omega)\}$ defined by \eqref{master_pred_approx} is a regularly varying sequence. With 
		\begin{equation*}
		\alpha(\omega) = \left(\prod_{j=\omega_0+1}^{\omega} \frac{1}{1-(\omega_0/j)^{\gamma}}\right) \frac{1}{\omega^{\gamma}}
		\end{equation*}
		then $\{\alpha(\omega)\}$ is a positive sequence and we see
		\begin{equation*}
		\hat{p}(\omega) \simeq e^{-\eta \omega_0^{\gamma}t} \omega_0^{\gamma}\alpha(\omega)
		\end{equation*}
		satisfies the first condition of a regularly varying sequence. Now for $\gamma>1$ we have
		\begin{align*}
		\omega(1-\alpha(\omega-1)/\alpha(\omega)) &= 
		\omega\left(1-\frac{1/(\omega-1)^{\gamma}}{1/(1-(\omega_0/\omega)^{\gamma}) \cdot 1/\omega^{\gamma}} \right)\\
		&= \omega\left(1-\frac{\omega^{\gamma}-\omega_0^{\gamma}}{(\omega-1)^{\gamma}}
		\right) \\
		&=\omega\left(1-\frac{1}{(1-1/\omega)^{\gamma}} - \frac{\omega_0}{(\omega-1)^{\gamma}} \right) \\
		&=\omega\left(1-(1+\gamma/\omega+\mathcal{O}(1/\omega^{\gamma})) - \frac{\omega_0}{(\omega-1)^{\gamma}} \right) \\
		&= -\gamma +\mathcal{O}(1/\omega^{\gamma-1}) \\
		& \rightarrow -\gamma \quad \text{ as } \omega \rightarrow \infty \,.
		\end{align*}
		Thus the second condition is also satisfied and so $\{\hat{p}(\omega)\}$ is a regularly varying sequence and thus can be embedded into a regularly varying function with index $-\gamma\,.$


\end{document}